\documentclass[12pt]{amsart}

\usepackage{amssymb}

\usepackage{anysize}
\marginsize{2cm}{2cm}{2cm}{2cm}

\usepackage[pdftex]{graphicx}

\usepackage{versions}
\usepackage{hyperref}
\hypersetup{
  colorlinks   = true, 
  urlcolor     = blue, 
  linkcolor    = blue, 
  citecolor   = red 
}
\usepackage{cancel}
\usepackage{yhmath}
\usepackage{enumitem}
\usepackage{stackengine}



\newtheorem{theorem}{Theorem}[section]
\newtheorem{proposition}[theorem]{Proposition}

\theoremstyle{definition}
\newtheorem{definition}[theorem]{Definition}

\theoremstyle{remark}
\newtheorem{remark}[theorem]{Remark}

\numberwithin{equation}{section}


\DeclareMathOperator{\rank}{rank}

\newcommand*{\where}{\ \ifnum\currentgrouptype=16 \middle\fi|\ }

\renewcommand{\epsilon}{\varepsilon}
\renewcommand{\phi}{\varphi}
\renewcommand{\kappa}{\varkappa}
\renewcommand{\theta}{\vartheta}

\def\Z{{\mathbb Z}}
\def\R{{\mathbb R}}
\def\C{{\mathbb C}}
\def\K{{\mathbb K}}

\title{Tensor rank of the determinant and periodic triangulations of $\R^n$}

\author{Sergey Avvakumov}

\author{Roman Karasev}

\address{Sergey~Avvakumov,  School of Mathematical Sciences, Tel Aviv University, Tel Aviv 69978, Israel}
\email{savvakumov@gmail.com}

\address{Roman~Karasev, Institute for Information Transmission Problems RAS, Bolshoy Karetny per. 19, Moscow, Russia 127994 and Moscow Institute of Physics and Technology, Institutskiy per. 9, Dolgoprudny, Russia 141700}
\email{r\_n\_karasev@mail.ru}
\urladdr{http://www.rkarasev.ru/en/}

\begin{document}

\maketitle

\begin{abstract}

We prove that in any $\Z^n$-periodic triangulation of $\R^n$ the number of $\Z^n$-orbits of $n$-dimensional simplices is at least the tensor rank of the $n$th determinant tensor. The latter is known to be at least $\frac{n^{n-1}}{(n-1)!}$, which is approximately $\frac{e^n}{\sqrt{2\pi n}}$ for large $n$.
The triangulation is not assumed to be geometric, meaning that its simplices can be ``curved''.

We also provide lower bounds for general spaces. A \emph{simplicial cell complex} is a CW-complex glued out of simplices with the attaching maps being simplicial embeddings; this notion generalizes simplicial complexes. We prove that if $X$ is a simplicial cell complex with cohomological classes $\alpha_i\in H^{d_i}(X;\Z_2)$ satisfying 
\[
\alpha_1 \smile \alpha_2 \smile \ldots \smile \alpha_n \neq 0,
\]
then $X$ has at least $2^n$ simplices of dimension $d_1+d_2+\ldots+d_n$. In particular, a simplicial cell complex homeomorphic to $\R P^n$, $\C P^n$, or $(S^2)^n$, has at least $2^n$ top-dimensional simplices.

A \emph{crystallization} of a manifold is a simplicial cell complex homeomorphic to this manifold and having the least possible number of vertices. We give a short explicit construction of a crystallization and a triangulation of $\R^n/\Z^n$ with $n+1$ and $2^{n+1}-1$ vertices, resp. Triangulations with this many vertices were described before and no smaller triangulation is known.

\end{abstract}

\section{Main results}

A \emph{triangulation} of a topological space is a \emph{simplicial complex} homeomorphic to the space. Informally, it is a way to describe a topological space as a set of simplices glued face-to-face.

Simplicial complexes are a special case of \emph{simplicial cell complexes} (also known as regular simplicial sets) defined below. In a simplicial cell complex each gluing map is still a simplicial embedding, but two simplices may intersect by more than one face in their boundary. A \emph{crystallization} of a space is a simplicial cell complex homeomorphic to this space and having the least possible number of vertices.

	\begin{figure}[ht]
	\center
	\includegraphics[width=0.8\linewidth]{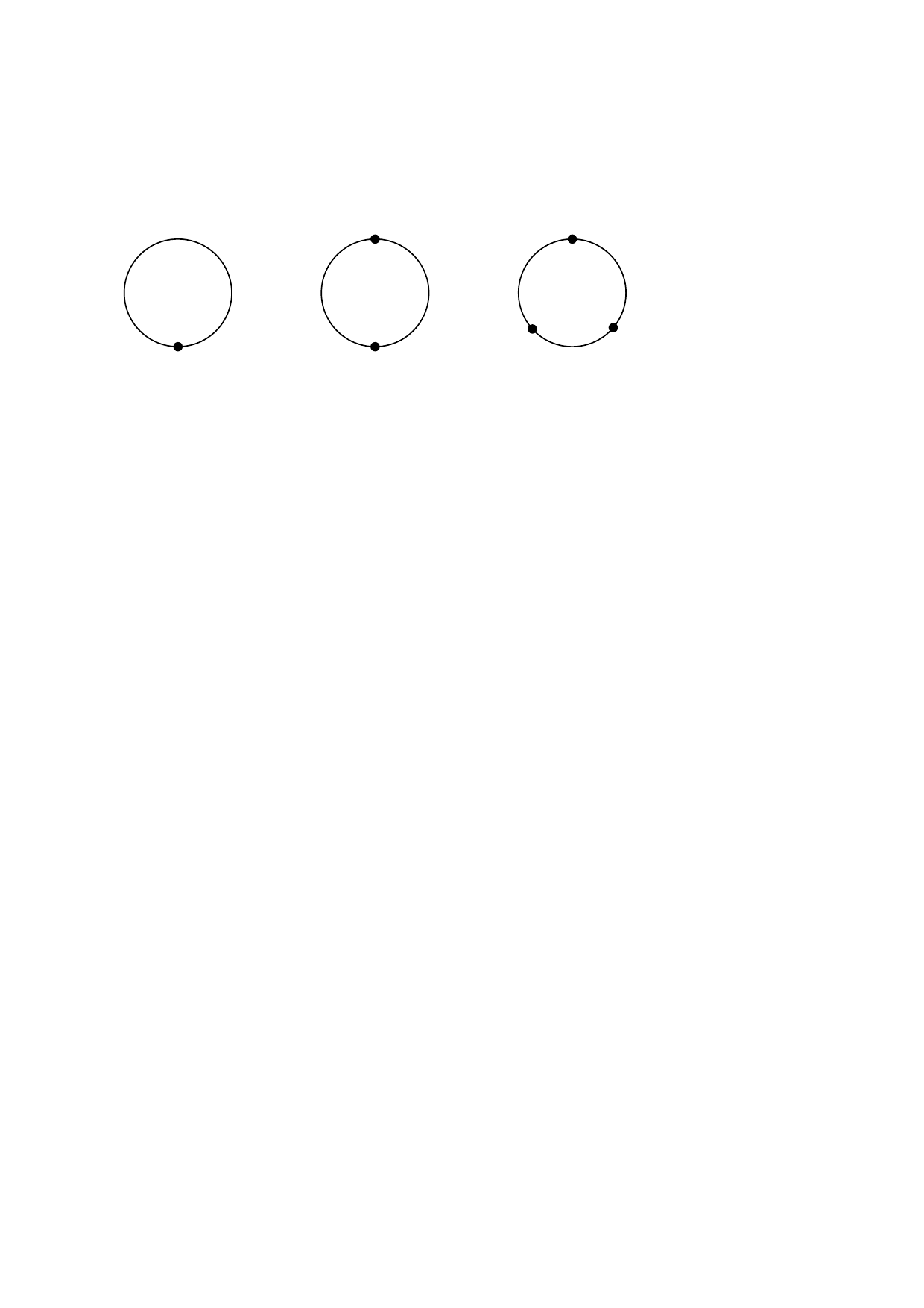}
	\caption{Three circles glued out of $1$, $2$, and $3$ simplices, resp. The middle circle is a simplicial cell complex and a crystallization. The right circle is a simplicial complex, and hence a triangulation. The left circle is not a simplicial cell complex.}
	\label{figure:circles}
	\end{figure}

Our goal is to establish new lower bounds on the minimal number of top-dimensional simplices needed to triangulate a space or to represent it as a simplicial cell complex.

For triangulations of manifolds, the current best lower bound is given by the manifold version of the $g$-theorem due to Adiprasito \cite{adiprasito2018combinatorial} combined with \cite[Theorem 6]{klee2016face}. 
It implies (see \cite[Theorem 4.4]{govc2020many}) that any triangulation of an $n$-dimensional manifold has at least roughly $\sum {n \choose i}\beta_i$ $n$-dimensional simplices, where $\beta_i$ are the Betti numbers of the manifold. This translates to exponential (in $n$) lower bounds for manifolds such as the $n$-torus $T^n=(S^1)^n$, $\R P^n$, $\C P^n$, or, even, $S^{n/2}\times S^{n/2}$. Theorem~\ref{theorem:Rn} below hints, however, that the true number should be superexponential (at least for $T^n$).

Manifold $g$-theorem is not applicable to non-manifolds or to subdivisions of manifolds which are not triangulations. So, less in known in this generality. In \cite{barany1982borsuk}, B{\'a}r{\'a}ny and Lov{\'a}sz proved that any centrally-symmetric polytope in $\R^n$ has at least $2^n$ facets. Their proof works for simplicial spheres with $\Z_2$-symmetry as well, which immediately translates to the $2^n$ lower bound on the number of $n$-simplices in any simplicial cell complex homeomorphic to $\R P^n$. In \cite{avvakumov2021topological} we generalize their result to other spaces, in particular, to the $n$-torus $T^n$. Crucially, our approach required the space to have a non-trivial fundamental group and so we could say nothing about, for example, $\C P^n$ or $(S^2)^n$.

Our first result is:

\begin{theorem}
\label{theorem:simplicial_cell_size}
Let $X$ be a topological space. Suppose $\alpha_i\in H^{d_i}(X;\Z_2)$ are its cohomological classes modulo $2$ with the property that
\[
\alpha_1 \smile \alpha_2 \smile \ldots \smile \alpha_n \neq 0.
\]
Then any simplicial cell complex homeomorphic to $X$ has at least $2^n$ simplices of dimension $d_1+d_2+\ldots+d_n$.
\end{theorem}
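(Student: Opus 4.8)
The plan is to read the hypothesis at the level of simplicial cochains and then run an induction on $n$.

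First I fix a simplicial cell complex $K$ homeomorphic to $X$, pick an ordering of the vertices local to each simplex so that the Alexander--Whitney cup product is available on simplicial $\Z_2$-cochains, and choose cocycles $a_i$ supported on $d_i$-dimensional simplices with $[a_i]=\alpha_i$. Then $a_1\smile\cdots\smile a_n$ is a cocycle of degree $d=d_1+\cdots+d_n$ representing $\alpha_1\smile\cdots\smile\alpha_n\ne0$, and on a $d$-simplex $\sigma=[v_0<\cdots<v_d]$ it equals the product $\prod_i a_i(\sigma|_{[p_{i-1},p_i]})$ of the evaluations of the $a_i$ on the $n$ consecutive blocks of vertices of sizes $d_1,\dots,d_n$, where $p_0=0<p_1<\cdots<p_n=d$. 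Pairing with a $\Z_2$-cycle dual to the product shows that at least one $d$-simplex makes all $n$ of these block-evaluations equal to $1$. The whole content of the theorem is to improve ``one'' to ``$2^n$'', and a bare cochain count cannot do this — for $\R P^n$ all the $\alpha_i$ coincide, so tricks based on reordering the factors degenerate — so genuinely geometric input is needed.

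For the base case $n=1$ that input is that the attaching maps are \emph{embeddings}: if $K$ had a single $d_1$-simplex $\sigma$ (with $d_1\ge1$), its $d_1+1$ facets would be pairwise distinct $(d_1-1)$-simplices of $K$, the coboundary of the indicator cochain of any one facet would equal the indicator of $\sigma$, forcing $H^{d_1}(K;\Z_2)=0$ and contradicting $\alpha_1\ne0$; hence $K$ has $\ge2=2^1$ simplices of dimension $d_1$. For the inductive step I would split off $\alpha_n$: using a cocycle for $\alpha_n$, build a simplicial cell complex of dimension $d-d_n$ sitting inside (or mapping to) $K$ that ``carries'' $\alpha_n$ — in the manifold case this is the Poincar\'e dual of $\alpha_n$, assembled from the dual blocks of the simplices on which $a_n$ is supported — on which the remaining classes still multiply nontrivially, since cupping with $\alpha_n$ is restriction to this dual. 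The inductive hypothesis then yields $\ge2^{n-1}$ simplices of dimension $d-d_n$ there, each of which should correspond to a top simplex of $K$, and the extra factor of $2$ would come from the base case applied to the link of such a $(d-d_n)$-simplex (a simplicial cell $(d_n-1)$-sphere, hence with at least two facets), each link-facet producing a $d$-simplex of $K$.

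The step I expect to be the real obstacle is exactly this last one: making the dual construction an honest simplicial cell complex of the correct dimension, keeping the dimension bookkeeping so that the $2^{n-1}$ lower simplices genuinely inject into the $d$-simplices of $K$, and — crucially — arranging that the doubling produced by the two link-facets lands on \emph{new} $d$-simplices for each of the $2^{n-1}$ choices, with no over-counting. This is also the step that must work with no hypothesis on $\pi_1(X)$: one could get a factor of $2$ cheaply by passing to the double cover classified by $\alpha_n$ when $d_n=1$, but here the factor of $2$ has to be extracted locally, which is precisely what lets the argument reach simply connected spaces such as $\C P^n$ and $(S^2)^n$. If the direct approach resists, the fallback is a fully equivariant one: attach to the $a_i$ a free $(\Z_2)^n$-action on an auxiliary space (an iterated deleted product, or an iterated sphere bundle), and bound its number of equivariant cells below by $2^n$ times the number of $d$-simplices of $K$ via an ideal-valued index computation, with $\alpha_1\smile\cdots\smile\alpha_n\ne0$ forcing the index ideal to be large enough.
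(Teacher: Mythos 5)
Your base case $n=1$ is correct, and your diagnosis of where the difficulty lies is honest: the inductive step is a genuine gap, not a technicality. In the generality of the theorem --- an arbitrary topological space $X$, not a manifold --- there is no Poincar\'e dual to $\alpha_n$, no reason for the object you would build to be a simplicial cell complex of codimension $d_n$, and no link-of-a-simplex that is a sphere. Even if $X$ were a manifold, you would still need the injection of the $2^{n-1}$ lower-dimensional simplices into the $d$-simplices of $K$ and the no-over-counting claim, both of which you flag as unresolved. The fallback equivariant index proposal is likewise only a direction, not an argument. One of your framing remarks is also misleading: you write that ``a bare cochain count cannot do this'' and that ``genuinely geometric input is needed,'' but the paper's proof is in fact a cochain-level count with no geometry at all; the ingredient you were not exploiting is the freedom to change the cocycle representatives within their classes.

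Here is the paper's argument, which is direct rather than inductive. Fix cocycles $a_i$ with $[a_i]=\alpha_i$. For each $i$ independently, build a random coboundary $b_i=\delta\beta_i$ by choosing a random $(d_i-1)$-cochain $\beta_i$ whose value on each $(d_i-1)$-simplex is an independent fair coin. Because attaching maps are embeddings, every $d_i$-simplex $\tau$ has $d_i+1\ge 2$ pairwise distinct facets, so $b_i(\tau)$ is a sum of at least two independent fair bits and hence $(a_i+b_i)(\tau)$ is uniform on $\{0,1\}$. Fix a $(d_1+\cdots+d_n)$-simplex $\sigma=[v_0<\cdots<v_d]$; the Alexander--Whitney formula expresses $(a_1+b_1)\smile\cdots\smile(a_n+b_n)(\sigma)$ as a product of $n$ block-evaluations on distinct faces of $\sigma$, one per $i$, and the perturbations are independent across $i$, so this product equals $1$ with probability exactly $2^{-n}$. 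Since $[a_i+b_i]=\alpha_i$, the perturbed cup product still represents $\alpha_1\smile\cdots\smile\alpha_n\ne 0$, hence for \emph{every} outcome at least one top simplex evaluates to $1$. Taking expectations of the count of such simplices gives $1\le 2^{-n}\cdot(\text{number of top simplices})$, i.e.\ the $2^n$ bound. This sidesteps every obstacle you identified, works uniformly for $\R P^n$, $\C P^n$, and $(S^2)^n$, and is precisely the averaging that turns ``at least one'' into ``at least $2^n$.''
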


As a corollary, any simplicial cell complex homeomorphic to $T^n$, $\R P^n$, $\C P^n$, or $(S^2)^n$ has at least $2^n$ top-dimensional simplices. This bound is sharp for $\R P^n$. Indeed, taking the quotient of the antipodal action on the boundary of the cross-polytope in $\R^{n+1}$ one gets a crystallization of $\R P^n$ with $n+1$ vertices and $2^n$ top-dimensional simplices. By Theorem~\ref{theorem:Rn} below, this bound is not sharp for $T^n$ and is unlikely to be sharp for any other manifold.

Our second result connects the number of simplices with the tensor rank of the determinant tensor. Let us give the required definition first.

\begin{definition}[Tensor rank]
Let $V$ be a linear space over a field $\K$. For a tensor $T\in V^{\otimes n}$ its \emph{rank} $\rank_\K(T)$ is the minimal number $r$ such that $T$ can be decomposed as the sum:
\[
T=\sum^{r}_{i=1}v_{i,1}\otimes v_{i,2} \otimes \ldots \otimes v_{i,n},\quad \text { where } v_{i,j}\in V \text{ for all } j.
\]
\end{definition}

The $n$th \emph{determinant tensor} ${\det}_n$ is the determinant of $n\times n$ real matrices considered as a tensor in $(\R^n)^{\otimes n}$. The exact value of $\rank_\R({\det}_n)$ is unknown. There have been more than a few works improving on either lower or upper bounds. The current best lower \cite{han2025recursive} and upper \cite{houston2024new} bounds are 
\[
\frac{n^{n-1}}{(n-1)!} \leq \rank_\R({\det}_n) \leq B_n < \biggl( \frac{n}{e^{1-\varepsilon}\cdot {\ln n}} \biggr)^n,
\]
where $B_n$ is the $n$th Bell number. The upper bound is still very close to the trivial $n!$ bound following from the Leibniz formula. In view of the following theorem, we hope that the actual value of $\rank_\R({\det}_n)$ is closer to the above upper bound and is superexponential in $n$.

\begin{theorem}
\label{theorem:simplicial_cell_size_R}
Let $X$ be a topological space. Suppose $\alpha_1,\ldots,\alpha_n\in H^d(X;\R)$ for odd positive $d$ are its cohomological classes with the property that
\[
\alpha_1 \smile \alpha_2 \smile \ldots \smile \alpha_n \neq 0.
\]
Then any simplicial cell complex homeomorphic to $X$ has at least $\rank_\R({\det}_n)\geq \frac{n^{n-1}}{(n-1)!}$ simplices of dimension $nd$.
\end{theorem}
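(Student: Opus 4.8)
The plan is to mirror the structure of Theorem~\ref{theorem:simplicial_cell_size}, replacing the mod-$2$ cup-product argument with one over $\R$, and to identify the combinatorial count that emerges with a tensor-rank decomposition of ${\det}_n$. Let $Y$ be a simplicial cell complex homeomorphic to $X$, and let $N$ be its number of $nd$-dimensional cells. Since $d$ is odd, each class $\alpha_i\in H^d(Y;\R)$ can be represented by a simplicial cochain $a_i\in C^d(Y;\R)$, and the cup product at the cochain level is the usual Alexander--Whitney product, which on a simplicial cell complex is computed cell-by-cell by splitting an $nd$-simplex into $n$ consecutive faces of dimension $d$. The product $a_1\smile\cdots\smile a_n$ is thus a cochain on the $nd$-cells whose value on a given cell $\sigma$ is obtained by evaluating the $a_i$ on the $n$ ``front-to-back'' $d$-faces of $\sigma$. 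Because $d$ is odd, reordering the factors $a_{\pi(1)}\smile\cdots\smile a_{\pi(n)}$ for a permutation $\pi$ changes the result by $\sgn\pi$ up to a coboundary (graded commutativity of the cup product, with the sign governed by $d$ being odd so $d^2$ is odd). Summing these $n!$ reorderings with signs, $\sum_\pi \sgn\pi\, a_{\pi(1)}\smile\cdots\smile a_{\pi(n)}$ is again a cocycle representing $n!\,(\alpha_1\smile\cdots\smile\alpha_n)\neq 0$, so in particular it is a nonzero cochain on the $nd$-cells.

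The key step is to recognize that on each $nd$-cell $\sigma$ this antisymmetrized cochain is exactly a determinant. Fix $\sigma$; its vertex set, pulled back along the characteristic map, is a linearly ordered set $v_0<v_1<\cdots<v_{nd}$ of a standard simplex $\Delta^{nd}$. For each $i$, the value $a_i$ takes on a $d$-face spanned by a $d$-subset of the $v_j$'s organizes into a vector, and the Alexander--Whitney decomposition selects, for each way of cutting $\{0,1,\ldots,nd\}$ into $n$ blocks of contiguous indices (each of size $d+1$ overlapping at endpoints, i.e. front face on $\{0,\ldots,d\}$, next on $\{d,\ldots,2d\}$, etc.), the product of the corresponding $n$ scalars. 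Antisymmetrizing over $\pi$ converts this into the determinant of an $n\times n$ matrix whose entries are these evaluations; more precisely, the assignment $\sigma\mapsto(\text{value of }\sum_\pi\sgn\pi\ a_{\pi(1)}\smile\cdots\smile a_{\pi(n)}\text{ on }\sigma)$ factors through ${\det}_n$ applied to vectors built from the cochains $a_i$ restricted to faces of $\sigma$. Consequently, each nonzero value among the $N$ cells exhibits one rank-$1$ term $v_{\sigma,1}\otimes\cdots\otimes v_{\sigma,n}$ contributing to a decomposition of (a nonzero multiple of) ${\det}_n$, and the sum over all cells reconstructs ${\det}_n$ itself because the total cochain is a nonzero cocycle whose class is $n!\,\alpha_1\smile\cdots\smile\alpha_n$, detected against a homology class. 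Hence $N\geq \rank_\R({\det}_n)$.

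Let me spell out the bridge more carefully, since this is the part I expect to be the main obstacle. One must produce, from the triangulation, an honest expression ${\det}_n=\sum_{\sigma}c_\sigma\, v_{\sigma,1}\otimes\cdots\otimes v_{\sigma,n}$ with at most $N$ nonzero summands. The idea is: pick a homology class $h\in H_{nd}(Y;\R)$ pairing nontrivially with $\alpha_1\smile\cdots\smile\alpha_n$; write $h$ as a cycle $\sum_\sigma h_\sigma\,\sigma$. Then for arbitrary cochains $b_1,\ldots,b_n\in C^d(Y;\R)$ the scalar $\langle \sum_\pi\sgn\pi\ b_{\pi(1)}\smile\cdots\smile b_{\pi(n)}, h\rangle$ is multilinear and alternating in the $b_i$, and equals, cell by cell, $\sum_\sigma h_\sigma\det M_\sigma(b_1,\ldots,b_n)$ where $M_\sigma$ is the $n\times n$ matrix of face-evaluations. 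Choosing coordinates so that the relevant face-evaluation maps $b_i\mapsto(\text{entries})$ are surjective onto $\R^n$ on the support of $h$ (which is possible precisely because the cup product is nonzero, so the combined evaluation is onto), this identity rewrites ${\det}_n$ as a signed sum of $N$ rank-one tensors, one per $nd$-cell in the support, with the rank-one structure coming from the fact that each $\det M_\sigma$ is itself ${\det}_n$ precomposed with linear maps $\R^n\to\R^n$ in each slot. The final bound $\rank_\R({\det}_n)\geq \frac{n^{n-1}}{(n-1)!}$ is quoted from \cite{han2025recursive} as in the statement. The delicate points are: (i) checking the signs so that the AW-product genuinely antisymmetrizes to ${\det}_n$ rather than the permanent — here oddness of $d$ is essential; (ii) ensuring the evaluation maps can be made surjective simultaneously, i.e. that no degeneracy collapses the would-be $n\times n$ determinant to a lower-rank form; and (iii) handling the overlap structure of the contiguous face decomposition of $\Delta^{nd}$ correctly, so that exactly $n$ vectors (not more) are tensored per cell.
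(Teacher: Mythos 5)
Your overall strategy is right and matches the paper's: evaluate the real cup product on a homology cycle $z$ to get an alternating multilinear form, restrict to an $n$-dimensional space, identify it with $\det_n$, and read off a rank-one decomposition cell by cell. However, the explicit antisymmetrization $\sum_\pi\sgn\pi\,a_{\pi(1)}\smile\cdots\smile a_{\pi(n)}$ is not just an unnecessary detour --- it actually destroys the rank-one structure you need. On a cell $\sigma$, the antisymmetrized cochain evaluates to $\det\bigl[a_i(e_j(\sigma))\bigr]$, and as a tensor in the slots $(b_1,\ldots,b_n)$ this is $\det_n$ precomposed with the single linear map $b\mapsto\bigl(b(e_1(\sigma)),\ldots,b(e_n(\sigma))\bigr)$ in every slot; that is a \emph{scalar multiple of} $\det_n$ itself, not a rank-one tensor. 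Summing gives $\det_n = \bigl(\sum_\sigma z_\sigma\lambda_\sigma\bigr)\det_n$, an identity that bounds nothing. Your own phrase ``each $\det M_\sigma$ is itself $\det_n$ precomposed with linear maps'' already contradicts the claim of rank one.

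The fix, and what the paper does, is to \emph{not} antisymmetrize at the cochain level. Restrict to the $n$-dimensional space $V=\operatorname{span}(a_1,\ldots,a_n)$ (which injects into $H^d$). The form $(c_1,\ldots,c_n)\mapsto(c_1\smile\cdots\smile c_n)(z)$ on $V^n$ is multilinear, and it is \emph{already} alternating: evaluation on a cycle only depends on cohomology classes, and the cohomological cup product is graded-commutative with sign $(-1)^{d^2}=-1$ for odd $d$. Being a nonzero alternating $n$-form on an $n$-dimensional space, normalized so that $(a_1\smile\cdots\smile a_n)(z)=1$, it equals the determinant in the basis $a_1,\ldots,a_n$. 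The cell-by-cell expansion $(c_1\smile\cdots\smile c_n)(z)=\sum_\sigma z_\sigma\prod_{i}e_i(\sigma)(c_i)$ now has genuinely rank-one summands $z_\sigma\,e_1(\sigma)\otimes\cdots\otimes e_n(\sigma)\in(V^*)^{\otimes n}$, one per $nd$-cell, yielding $\rank_\R(\det_n)\le N$. Two side remarks: the alternating property is a statement about cocycles evaluated on a cycle, not about ``arbitrary cochains'' as you wrote; and your delicate point (ii) about surjectivity of evaluation maps is not an obstruction --- individual $\det M_\sigma$ may vanish without harm, and all that matters is that $V$ injects into cohomology, which follows from the $\alpha_i$ being linearly independent (forced by the nonvanishing cup product).
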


Sometimes this theorem can also be applied to more general subdivisions. A \emph{$\Z^n$-invariant triangulation of $\R^n$} is a simplicial complex with a free $\Z^n$-action which is $\Z^n$-equivariantly homeomorphic to $\R^n$. Equivalently, it is a tiling of $\R^n$ by $n$-simplices touching face-to-face, which is invariant under translations by the elements of $\Z^n$. Note, that the homeomorphism in the definition need not be linear on the simplices. So, the simplices of the tiling are not assumed to be \emph{geometric} and can be \emph{curved}, see Figure~\ref{figure:subdivision}.

\begin{remark}
\label{remark:distance}
For a $\Z^n$-invariant triangulation $\mathcal{T}$ of $\R^n$ let $d(\mathcal{T})$ be the minimum among edge-distances between vertices sharing the same $\Z^n$-orbit. The quotient $\mathcal{T}/\Z^n$ is always homeomorphic to $T^n$, however, depending on $d(\mathcal{T})$ we have:
\begin{itemize}
\item If $d(\mathcal{T})\geq 3$ then $\mathcal{T}/\Z^n$ is a simplicial complex, hence, a triangulation.
\item If $d(\mathcal{T})= 2$ then $\mathcal{T}/\Z^n$ is a simplicial cell complex, but not a simplicial complex.
\item If $d(\mathcal{T})=1$ then $\mathcal{T}/\Z^n$ is not a simplicial cell complex.
\end{itemize}
\end{remark}

As follows from the remark, if $d(\mathcal{T})=1$ then Theorem~\ref{theorem:simplicial_cell_size_R} cannot be directly applied to $\mathcal{T}/\Z^n$. Its conclusion still holds, however. In particular, it is true also for one vertex subdivisions of the torus, when $\mathcal{T}$ has a single vertex orbit:

\begin{theorem}
\label{theorem:Rn}
In any $\Z^n$-invariant triangulation of $\R^n$ the number of $\Z^n$-orbits of $n$-dimensional simplices is at least $\rank_\R({\det}_n)\geq \frac{n^{n-1}}{(n-1)!}$.
\end{theorem}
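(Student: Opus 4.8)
The plan is to turn any $\Z^n$-invariant triangulation $\mathcal T$ of $\R^n$ into a rank decomposition of ${\det}_n$ with exactly one rank-one summand per $\Z^n$-orbit of $n$-simplices; the inequality $\rank_\R({\det}_n)\ge\frac{n^{n-1}}{(n-1)!}$ quoted above then finishes the proof. If the edge-distance $d(\mathcal T)\ge 2$, then $\mathcal T/\Z^n$ is a simplicial cell complex homeomorphic to $T^n$, and since $H^*(T^n;\R)$ is the exterior algebra on generators $\alpha_1,\dots,\alpha_n\in H^1$ with $d=1$ odd and $\alpha_1\smile\cdots\smile\alpha_n\ne 0$, Theorem~\ref{theorem:simplicial_cell_size_R} applies verbatim. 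The real content is the case $d(\mathcal T)=1$ (for instance one-vertex-orbit subdivisions), in which $\mathcal T/\Z^n$ is not even a simplicial cell complex; I would treat it — and in fact all cases at once — by running the argument behind Theorem~\ref{theorem:simplicial_cell_size_R} one level up, on the genuine simplicial complex $\mathcal T$ itself.

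First I would fix an orientation of $\R^n$ and a $\Z^n$-invariant total order on the vertices of $\mathcal T$: identify each of the finitely many vertex orbits with $\Z^n$, order the orbits arbitrarily, and order within each orbit by the translation-invariant lexicographic order on $\Z^n$. This makes $\mathcal T$ an ordered simplicial complex with a $\Z^n$-equivariant Alexander--Whitney diagonal, so the simplicial cochain cup product on $C^*(\mathcal T;\R)$ is equivariant and restricts to the subcomplex of $\Z^n$-invariant cochains, which is precisely the cellular cochain complex of $T^n$ for the CW structure induced by $\mathcal T/\Z^n$; since $\mathcal T$ is contractible and the action is free, this presents $H^*(T^n;\R)$, with its cup product, as the cohomology of the invariant cochains. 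Next, for an ordered edge $[a,b]$ of $\mathcal T$ put $\delta([a,b])=b-a\in\R^n$; this is $\Z^n$-invariant, and for $\alpha\in(\R^n)^*$ the invariant $1$-cochain $c_\alpha:=\langle\alpha,\delta(\cdot)\rangle$ is a cocycle (its coboundary on a $2$-simplex vanishes because the three edge vectors sum to zero) whose class is the image of $\alpha$ under the canonical isomorphism $(\R^n)^*\cong H^1(T^n;\R)$.

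Now consider the multilinear form $D$ on $\bigl((\R^n)^*\bigr)^n$ defined by $D(\alpha_1,\dots,\alpha_n)=\langle[c_{\alpha_1}\smile\cdots\smile c_{\alpha_n}],[T^n]\rangle$. Since degree-one classes over $\R$ anticommute and square to zero, $D$ is alternating, hence a scalar multiple of the determinant form; and it is nonzero because for the standard generators of $H^1(T^n;\R)$ the cup product generates $H^n(T^n;\R)$ and pairs to $\pm1$ with $[T^n]$. Thus, under the rank-preserving duality $(\R^n)^{\otimes n}\cong\bigl((\R^n)^*\bigr)^{\otimes n}$, one has $\rank_\R D=\rank_\R{\det}_n$. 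On the other hand, writing the fundamental cycle as $[T^n]=\sum_\sigma\epsilon(\sigma)\,\sigma$ over a system of $\Z^n$-orbit representatives of the $n$-simplices of $\mathcal T$ (with $\epsilon(\sigma)=\pm1$ from the orientation), the Alexander--Whitney formula for an iterated cup product of $1$-cochains on a simplex with ordered vertices $w_0<\dots<w_n$ gives
\[
(c_{\alpha_1}\smile\cdots\smile c_{\alpha_n})(\sigma)=\prod_{k=1}^n\langle\alpha_k,\,w_k-w_{k-1}\rangle ,
\]
so setting $u_k(\sigma):=w_k-w_{k-1}\in\R^n$ we get $D=\sum_\sigma\bigl(\epsilon(\sigma)u_1(\sigma)\bigr)\otimes u_2(\sigma)\otimes\cdots\otimes u_n(\sigma)$, a sum of as many rank-one tensors as there are $\Z^n$-orbits of $n$-simplices. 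Hence that number is at least $\rank_\R{\det}_n\ge\frac{n^{n-1}}{(n-1)!}$.

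The geometric idea is short — each simplex of $\mathcal T/\Z^n$ contributes the single rank-one term built from the edge vectors along its ordered spine, and the cup-product pairing on $H^1(T^n;\R)$ \emph{is} the determinant tensor — so I expect the only genuine friction to lie in the homological bookkeeping of the second paragraph: arranging the $\Z^n$-invariant vertex order so that the cochain cup product descends, and checking that the invariant simplicial cochain ring of $\mathcal T$ really computes $H^*(T^n;\R)$ as a ring (equivalently, isolating the $\Z^n$-equivariant chain-level statement that replaces the simplicial-cell-complex hypothesis of Theorem~\ref{theorem:simplicial_cell_size_R}).
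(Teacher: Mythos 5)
Your proof is correct, and it takes a genuinely different route from the paper's. The paper handles the problematic case $d(\mathcal T)=1$ (where $\mathcal T/\Z^n$ is not a simplicial cell complex, so Theorem~\ref{theorem:simplicial_cell_size_R} cannot be invoked directly) by passing to a carefully chosen barycentric subdivision $\mathcal T'$: the barycenter of each face is placed $\varepsilon$-close to its lexicographically largest vertex, which makes $\mathcal T'/\Z^n$ a simplicial cell complex, and a limiting argument as $\varepsilon\to 0$ shows that, among the $(n+1)!$ subdivision simplices $\sigma_\pi$ of each original $n$-simplex $\sigma$, only $\sigma_{\rm id}$ contributes a non-vanishing rank-one summand. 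You instead avoid subdivision entirely by working $\Z^n$-equivariantly on $\mathcal T$ itself: choose a $\Z^n$-invariant total vertex order (orbit, then lexicographic within the orbit), note that the resulting Alexander--Whitney cup product is equivariant and hence restricts to invariant cochains, identify the invariant cochain ring as a model for $H^*(T^n;\R)$, and read off the tensor decomposition from the edge vectors $u_k(\sigma)=w_k-w_{k-1}$ along the ordered spine of each orbit representative. Both proofs produce a rank decomposition of a nonzero scalar multiple of ${\det}_n$ with exactly one summand per orbit of $n$-simplices; the paper's version has the advantage of reducing Theorem~\ref{theorem:Rn} directly to the already-proved Theorem~\ref{theorem:simplicial_cell_size_R}, while yours is cleaner in that it dispenses with the $\varepsilon\to 0$ limit and the analysis of which subdivision simplex survives, at the cost of having to check (as you flag yourself) that the invariant cochain cup product really does compute the ring $H^*(T^n;\R)$ — a standard point (free action on a contractible complex, equivariant AW diagonal), but one the paper sidesteps by staying in the framework of Theorem~\ref{theorem:simplicial_cell_size_R}.

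One small remark: your invariant ordering (arbitrary on orbits, lexicographic within each orbit) plays the same structural role as the paper's purely lexicographic order on $\R^n$, but note it is generally a different order; either works, since all that matters is translation invariance and that it restricts to a total order on each simplex.
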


In Section~\ref{section:constructions} we give short and explicit constructions of a crystallization of $T^n$ with $n+1$ vertices and of a triangulation of $T^n$ with $2^{n+1}-1$ vertices. A family of triangulations of $T^n$ with $2^{n+1}-1$ vertices was first constructed in \cite{kuhnel1988combinatorial}, see also \cite{jevtic2021generalized} for a different approach. We believe (without proving it), that our construction is isomorphic to triangulations in \cite{kuhnel1988combinatorial} and \cite{jevtic2021generalized}. No triangulation of $T^n$ with fewer than $2^{n+1}-1$ vertices is known.

\subsection*{Acknowledgments} 
We thank Marcos Cossarini for suggesting the matrix \eqref{equation:2-1_matrix} when we were trying to construct a crystallization of $T^n$. We also thank ChatGPT 5 for pointing out that the lower bound in the proof of Theorem~\ref{theorem:Rn} can be stated in tensor language and is thus equal to the determinant's tensor rank.

\section{Proofs}

Recall one of the equivalent definitions of the cup product (see \cite[Section 3.2 ``Cup product'']{hatcher2002algebraic}):

\begin{definition}[Cup product]
Let $\mathcal{S}$ be a simplicial cell complex. Choose a partial order on the vertices of $\mathcal{S}$ which restricts to a complete order on every simplex. Let $a_1, a_2, \cdots, a_n$ be cocyles on $\mathcal{S}$ of dimensions $d_1,d_2,\ldots,d_n$, respectively. Then the value of their cup product on a simplex $[v_0,v_1,\ldots,v_{d_1+\ldots+d_n}]$ with $v_0<v_1<\ldots < v_{d_1+\ldots+d_n}$ is computed by the formula:
\begin{multline}
\label{multline:cup_product}
(a_1\smile a_2\smile\ldots\smile a_n)[v_0,v_1,\ldots,v_{d_1+\ldots+d_n}]:=\\
=a_1[v_0,\ldots,v_{d_1}]\cdot a_2[v_{d_1},\ldots,v_{d_1+d_2}]\cdot a_3[v_{d_1+d_2},\ldots,v_{d_1+d_2+d_3}] \cdot\ldots\cdot a_n[v_{d_1+\ldots +d_{n-1}},\ldots,v_{d_1+\ldots +d_{n}}].
\end{multline}
The cup product of the corresponding cohomology classes is:
\[
[a_1]\smile[a_2]\smile\ldots \smile [a_n] := [a_1\smile a_2\smile\ldots\smile a_n].
\]
\end{definition}

Note, that according to this definition, the cocycle $a_1\smile a_2\smile\ldots\smile a_n$ depends on the chosen partial order of the vertices. Its cohomological class $[a_1\smile a_2\smile\ldots\smile a_n]$, however, is independent of the choice. 

\begin{proof}[Proof of Theorem~\ref{theorem:simplicial_cell_size}]
Let $\mathcal{S}$ be a simplicial cell complex homeomorphic to $X$.
For each $\alpha_i$, choose a cocycle $a_i$ in $\mathcal{S}$ representing $\alpha_i$, meaning that $[a_i]=\alpha_i$.

For each $i$, construct a random $d_i$-dimensional coboundary $b_i$ in the following way: choose every $(d_i-1)$-dimensional simplex of $\mathcal{S}$ uniformly at random with probability $\frac{1}{2}$, then sum up the coboundaries of the chosen simplices. So, for any $d_i$-simplex $\sigma$ the value of $b_i(\sigma)$, and thus also the value of $(a_i+b_i)(\sigma)$, is a random variable equal to $0$ or $1$ with probabilities $\frac{1}{2}$. Also, by construction, all $b_i$ are independent of each other.

Consider an arbitrary $(d_1+d_2+\ldots+d_n)$-simplex $[v_0,v_1,\ldots,v_{d_1+\ldots+d_n}]$ with $v_0<v_1<\ldots < v_{d_1+\ldots+d_n}$. Then the values
\begin{flalign*}
&(a_1+b_1)[v_0,\ldots,v_{d_1}],\\
&(a_2+b_2)[v_{d_1},\ldots,v_{d_1+d_2}],\\
&\ldots,\\
&(a_n+b_n)[v_{d_1+\ldots +d_{n-1}},\ldots,v_{d_1+\ldots +d_{n}}]
\end{flalign*}
are independent random variables, each equal $0$ or $1$ with probabilities $\frac{1}{2}$. Hence, by (\ref{multline:cup_product}), we have that
\[(a_1+b_1)\smile (a_2+b_2)\smile\ldots\smile (a_n+b_n)[v_0,v_1,\ldots,v_{d_1+\ldots+d_n}]\] is $1$ with probability $2^{-n}$ and is $0$ otherwise.

Since 
\[[(a_1+b_1)\smile (a_2+b_2)\smile\ldots\smile (a_n+b_n)]=\alpha_1 \smile \alpha_2 \smile \ldots \smile \alpha_n \neq 0,\]
we know that for any possible choice of $b_1, b_2,\ldots, b_n$ there is at least one $(d_1+d_2+\ldots+d_n)$-simplex on which $(a_1+b_1)\smile (a_2+b_2)\smile\ldots\smile (a_n+b_n)$ evaluates to $1$. Hence, the total number of $(d_1+d_2+\ldots+d_n)$-simplices must be at least $2^n$.
\end{proof}

\begin{proof}[Proof of Theorem~\ref{theorem:simplicial_cell_size_R}]

Choose the cocycles $a_i$ representing the respective cohomology classes $\alpha_i$. 
Denote by $V$ the $n$-dimensional real vector space spanned by $a_1,\ldots,a_n$. Its natural map to $H^d(X; \R)$ is an injection. Let
\[
z = \sum_{\sigma} z_\sigma \sigma
\]
with the sum over the $nd$-simplices of $X$ be the $nd$-cycle witnessing the non-zero cup product, and we scale $z$ so that
\[
(\alpha_1\smile\alpha_2\smile\dots\smile\alpha_n)(z) = 1.
\]

For any $c_1,c_2,\ldots, c_n\in V$ we have that
\[
(c_1\smile c_2\smile\ldots\smile c_n)(z) = {\det}_n(c_1\otimes c_2\otimes\ldots\otimes c_n),
\]
where the determinant is considered in the base $a_1,\ldots, a_n$ of $V$. This is so since the cup product is skew-symmetric for odd $d$.

With a slight abuse of notation we shall view any $d$-face $e$ of $X$ as a covector in $V^*$ and denote the pairing with $c\in V$ by $e(c)$.

Order the vertices of $X$. Consider an $nd$-face $\sigma=[v_0,v_1,\ldots, v_{nd}]$ of $X$. For each $1\leq i \leq n$, let $e_i(\sigma)$ be the $d$-face $[v_{(i-1)d},\ldots, v_{id}]$. By (\ref{multline:cup_product}), we have that
\begin{multline*}
(c_1\smile c_2\smile\ldots\smile c_n)(\sigma) = e_1(\sigma)(c_1)\dots e_n(\sigma)(c_n)
= \\
= e_1(\sigma)\otimes e_2(\sigma) \otimes \ldots \otimes e_n(\sigma)(c_1\otimes c_2\otimes\ldots\otimes c_n).
\end{multline*}
Summing up this equality over all the $nd$-simplices $\sigma$ of $X$ with coefficients $z_\sigma$ we get
\begin{equation}
\label{equation:sum_over_simplices}
{\det}_n(c_1\otimes c_2\otimes\ldots\otimes c_n) = \sum_{\sigma} z_\sigma \cdot  e_1(\sigma)\otimes e_2(\sigma) \otimes \ldots \otimes e_n(\sigma)(c_1\otimes a_2\otimes\ldots\otimes c_n).
\end{equation}
This equality holds for any choice of $c_1,c_2,\ldots, c_n\in V$, so in the tensor notation
\[
{\det}_n = \sum_{\sigma} z_\sigma \cdot e_1(\sigma)\otimes e_2(\sigma) \otimes \ldots \otimes e_n(\sigma).
\]
By the definition of $\rank_\R({\det}_n)$, the number of summands on the right, and therefore the number of $nd$-simplices in $X$, is at least $\rank_\R({\det}_n)$.
\end{proof}

Let us now describe the idea of the proof of Theorem~\ref{theorem:Rn}. It does not immediately follow from Theorem~\ref{theorem:simplicial_cell_size_R}, because the quotient $\mathcal{T}/\Z^n$ is not a simplicial cell complex when $d(\mathcal{T})=1$. We construct a subdivision $\mathcal{T'}$ of $\mathcal{T}$ with $d(\mathcal{T})\geq 2$. We do it in a specific way, so that for every simplex of $\mathcal{T}$ the contribution of all but one simplices in its subdivision to the determinant \eqref{equation:sum_over_simplices} is negligible. Then we apply Theorem~\ref{theorem:simplicial_cell_size_R} to $\mathcal{T'}/\Z^n$ and notice that the number of non-negligible summands on the right hand side of \eqref{equation:sum_over_simplices} is equal tot he number of simplices in the original $\mathcal{T}/\Z^n$.

\begin{proof}[Proof of Theorem~\ref{theorem:Rn}]
For each $1\leq i \leq n$, let us define a real valued $1$-cocycle $dx_i$ on the torus $\mathcal{T}/\Z^n$ as follows. For an oriented edge $e$, the value of $dx_i(e)\in\R$ is the difference between the $i$th coordinates of the endpoint and the startpoint of one of the preimages of $e$ in $\mathcal{T}$; this value is independent on the choice of the preimage. Theorem~\ref{theorem:simplicial_cell_size_R} for $d=1$ applies to the cohomology classes $\alpha_i = [dx_i]$ if $\mathcal{T}/\Z^n$ is a simplicial cell complex. But the latter need not be the case, so we need to reduce the problem to this particular case.

Each vertex of $\mathcal{T}$ is a point in $\R^n$. Order the vertices \emph{lexicographically}, meaning that $v < u$ iff for some $i$ all the coordinates of $v$ and $u$ before the $i$th are the same, and the $i$th coordinate of $v$ is smaller than the $i$th coordinate of $u$. This lexicographic order is $\Z^n$-invariant, i.e., for any $g\in \Z^n$ we have that $v < u \iff v+g < u+g$.

Let $\mathcal{T'}$ be the barycentric subdivision of $\mathcal{T}$. In the barycentric subdivision construction the ``barycenter'' representing a face need not be the geometric barycenter of the face, but may be any point in its relative interior. Using this freedom, for every simplex (of every dimension) we choose its barycenter to be $\varepsilon$ close to its lexicographically largest vertex. By the previous remark, $\mathcal{T'}$ is also $\Z^n$-invariant, see Figure~\ref{figure:subdivision}.

	\begin{figure}[ht]
	\center
	\includegraphics[width=0.9\linewidth]{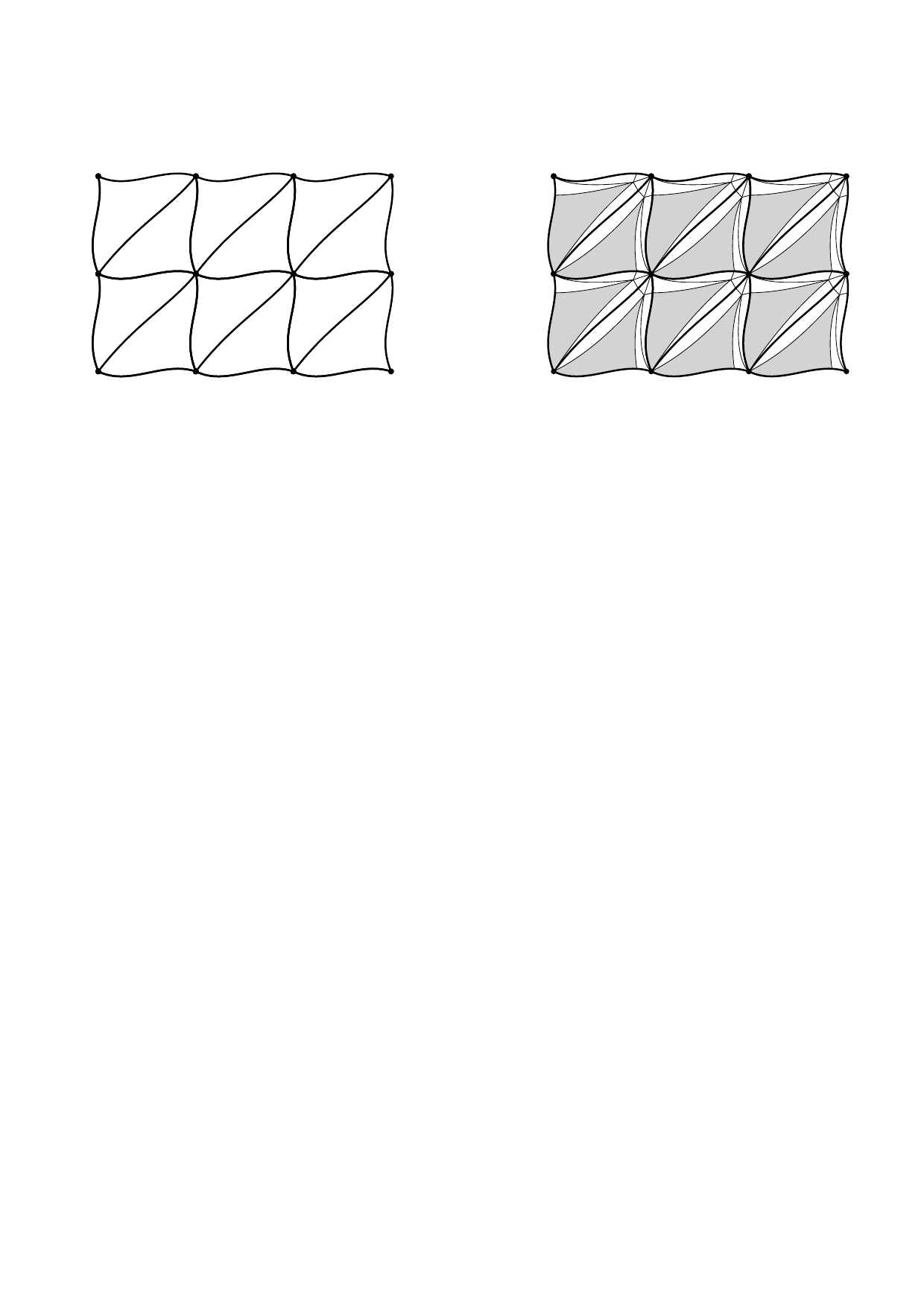}
	\caption{Subdivision $\mathcal{T'}$. Only the gray triangles contribute significantly to the volume form.}
	\label{figure:subdivision}
	\end{figure}
	
We have that $d(\mathcal{T'})\geq 2$, so the quotient $\mathcal{T'}/\Z^n$ is a simplicial cell complex and Theorem~\ref{theorem:simplicial_cell_size_R} applies.  It remains to prove, that for any choice of cocycles $c_1,c_2,\ldots,c_n$ and for every $n$-simplex $\sigma$ of $\mathcal{T}/\Z^n$, there is at most one simplex in its barycentric subdivision, whose contribution to $c_1\smile c_2\smile\ldots\smile c_n$ is not proportional to $\varepsilon$. Taking $\varepsilon\to 0$, we will see that when we sum on the right hand side of (\ref{equation:sum_over_simplices}) over the simplices of the subdivision $\mathcal{T'}/\Z^n$, each summand converges to zero, except maybe one summand per each $n$-simplex of the original $\mathcal{T}/\Z^n$.

The vertices of the barycentric subdivision $\mathcal{T'}$ are ordered by the inclusion order on the faces of $\mathcal{T}$. This is a partial \emph{dimensional} order on the vertices of $\mathcal{T'}$ and of $\mathcal{T'}/\Z^n$. We use this dimensional order to compute $c_1\smile c_2\smile\ldots\smile c_n$ on $\mathcal{T'}/\Z^n$.

Let $v_0 < v_1 < \ldots < v_n$ be the vertices of an $n$-simplex $\sigma$ of $\mathcal{T}$ ordered lexicographically. Let $\pi$ be a permutation and $\sigma_\pi$ be the simplex in the barycentric subdivision of $\sigma$ corresponding to $\pi$ as follows. Ordered dimensionally, the vertices of $\sigma_\pi$ are the barycenters of the following faces of $\sigma$:
\[
\{v_{\pi(0)}\}, \{v_{\pi(0)}, v_{\pi(1)}\}, \{v_{\pi(0)}, v_{\pi(1)}, v_{\pi(2)}\}, \ldots, \{v_{\pi(0)}, v_{\pi(1)}, v_{\pi(2)},\ldots, v_{\pi(n)}\} .
\]
So, when computing $c_1\smile c_2\smile\ldots\smile c_n$ on $\sigma_\pi$ using the formula (\ref{multline:cup_product}), we will multiply the values of the cocycles on the following edges:
\begin{flalign*}
&\{v_{\pi(0)}, v_{\pi(1)}\}, \{v_{\pi(0)}\} \\
&\{v_{\pi(0)}, v_{\pi(1)}, v_{\pi(2)}\}, \{v_{\pi(0)}, v_{\pi(1)}\} \\
&\ldots\\
&\{v_{\pi(0)}, v_{\pi(1)}, v_{\pi(2)},\ldots, v_{\pi(n)}\}, \{v_{\pi(0)}, v_{\pi(1)}, v_{\pi(2)},\ldots, v_{\pi(n-1)}\}.
\end{flalign*}
For any $1\leq i \leq n$ consider the edge
\[
\{v_{\pi(0)}, v_{\pi(1)}, v_{\pi(2)},\ldots, v_{\pi(i)}\}, \{v_{\pi(0)}, v_{\pi(1)}, v_{\pi(2)},\ldots, v_{\pi(i-1)}\}.
\]
By our choice of the barycenters positions, the endpoints of this edge are $\varepsilon$ close to the lexicographically largest vertex among $\{v_{\pi(0)}, v_{\pi(1)}, v_{\pi(2)},\ldots, v_{\pi(i)}\}$ and $\{v_{\pi(0)}, v_{\pi(1)}, v_{\pi(2)},\ldots, v_{\pi(i-1)}\}$, respectively. Which means that, unless $v_{\pi(i)}$ is lexicographically larger than any of the vertices $v_{\pi(0)}, v_{\pi(1)}, v_{\pi(2)},\ldots, v_{\pi(i-1)}$, the edge is $2\varepsilon$ short and so the cup product on $\sigma_\pi$ goes to $0$ when $\varepsilon$ goes to $0$.

Using this consideration for every $1\leq i \leq n$, we see that the cup product on $\sigma_\pi$ is $O(\varepsilon)$ unless $v_{\pi(i)}$ is lexicographically larger than any of $v_{\pi(0)}, v_{\pi(1)}, v_{\pi(2)},\ldots, v_{\pi(i-1)}$ for every $i$. Which is only possible if $\pi={\rm id}$. So, among all the simplices $\sigma_\pi$ in the barycentric subdivision of $\sigma$, only $\sigma_{\rm id}$ contributes to (\ref{equation:sum_over_simplices}) something that does not tend to zero as $\varepsilon\to 0$.

In the (pointwise in terms of $c_1,c_2,\ldots, c_n$) limit $\varepsilon\to 0$ the right hand side of (\ref{equation:sum_over_simplices}) for $\mathcal T'/Z_n$ is a sum of decomposable tensors one-to-one corresponding to the $n$-simplices of $\mathcal T/\Z_n$. Thus the result follows.
\end{proof}

\section{Crystallization and triangulation of $T^n$}
\label{section:constructions}

Denote by $\mathcal{T}$ the most standard $\Z^n$-invariant triangulation of $\R^n$.
In it, every unit cube with integer vertices is subdivided into $n!$ congruent simplices using the \emph{staircase} triangulation. For example, the standard unit cube $[0,1]^n$ is subdivided into the $n!$ simplices:
\[
[0, e_{\pi(1)}, e_{\pi(1)}+e_{\pi(2)}, \ldots, e_{\pi(1)}+e_{\pi(2)}+\ldots+ e_{\pi(n)}],
\]
where $e_1,e_2,\ldots,e_n$ is the standard basis and $\pi$ ranges through all the permutations.

The only property of $\mathcal{T}$ we will need is that its every edge is a $\{0,1\}$ vector, i.e., each of its coordinates belongs to $\{0,1\}$.

Let $A\subset\Z^n$ be the subgroup of integer vectors with the sum of their coordinates divisible by $n+1$.

\begin{proposition}
\label{proposition:crystallization}
The quotient $\mathcal{T}/A$ is a crystallization of $T^n$.
\end{proposition}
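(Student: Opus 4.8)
The plan is to check four things in turn: that $\mathcal{T}/A$ is homeomorphic to $T^n$, that it is genuinely a simplicial cell complex, that it has exactly $n+1$ vertices, and that no simplicial cell complex homeomorphic to $T^n$ has fewer vertices. For the first and third points I would start from the elementary structure of $A$: it is the kernel of the homomorphism $\Z^n\to\Z/(n+1)$ sending a vector to the residue of the sum of its coordinates, so $[\Z^n:A]=n+1$, and since $(n+1)\Z^n\subseteq A$ it is a full-rank lattice in $\R^n$. As a subgroup of $\Z^n$ it acts on $\mathcal{T}$ simplicially, freely, and properly discontinuously, and since $\mathcal{T}$ is $\Z^n$-equivariantly homeomorphic to $\R^n$, the quotient $\mathcal{T}/A$ is homeomorphic to $\R^n/A$; as any full-rank lattice is the image of $\Z^n$ under some element of $\mathrm{GL}_n(\R)$, this descends to a homeomorphism $\R^n/\Z^n\to\R^n/A$, so $\mathcal{T}/A\cong T^n$. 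The vertex set of $\mathcal{T}$ is $\Z^n$, hence that of $\mathcal{T}/A$ is $\Z^n/A\cong\Z/(n+1)$, which has exactly $n+1$ elements.

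For the second point the only property of $\mathcal{T}$ I would use is the one singled out in the text: any two vertices of a common simplex of $\mathcal{T}$ span an edge, and every edge of $\mathcal{T}$ is, up to sign, a $\{0,1\}$-vector, so the difference of two such vertices has coordinate sum a nonzero integer of absolute value at most $n$, and is therefore never divisible by $n+1$. Hence no element of $A\setminus\{0\}$ identifies two distinct vertices of any simplex of $\mathcal{T}$. I would then upgrade this to the statement that the quotient map $q\colon\mathcal{T}\to\mathcal{T}/A$ is injective on every closed simplex: if $x\neq y$ lay in a simplex $\sigma$ with $y=gx$ for some $g\in A\setminus\{0\}$, then $x\in\sigma\cap g^{-1}\sigma$, which---$\mathcal{T}$ being an honest simplicial complex---is a common face $\tau$ of $\sigma$ and $g^{-1}\sigma$; any vertex $w$ of $\tau$ then has both $w$ and $gw$ among the vertices of $\sigma$, which is impossible since $gw=w$ contradicts freeness and $gw\neq w$ contradicts the collision-free condition above. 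Being injective and continuous from a compact space to a Hausdorff one, $q$ embeds each closed simplex; the induced CW structure on $\mathcal{T}/A$ is thus regular with simplices as closed cells, i.e.\ $\mathcal{T}/A$ is a simplicial cell complex in the sense of the introduction (distinct simplices may now share several faces, which is permitted). This is precisely the ``$d\geq 2$'' case of Remark~\ref{remark:distance}, applied to the sublattice $A$ in place of $\Z^n$.

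For the fourth point, let $Y$ be any simplicial cell complex homeomorphic to $T^n$. Since $T^n$ is a closed $n$-manifold, $Y$ has dimension $n$: the interior of a top-dimensional cell is open in $Y$ and homeomorphic to a Euclidean space, which by invariance of domain must be $\R^n$. Hence $Y$ has at least one $n$-cell, and by the definition of a simplicial cell complex the attaching map of that cell is a simplicial embedding, so it is injective on the $n+1$ vertices of the model $n$-simplex, which therefore map to $n+1$ distinct vertices of $Y$. Thus $Y$ has at least $n+1$ vertices, and since $\mathcal{T}/A$ attains this minimum it is a crystallization of $T^n$.

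The step I expect to require the most care is the second one: the collision-free condition on vertices is immediate from the $\{0,1\}$-edge property, but deducing from it that $\mathcal{T}/A$ is \emph{literally} a simplicial cell complex---every closed simplex embedded and the cell structure regular---is the place where one must be precise, which is why I isolated the ``common face'' argument; everything else is bookkeeping.
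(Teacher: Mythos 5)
Your proof is correct and follows the same approach as the paper: identify $A$ as an index-$(n+1)$ full-rank sublattice, use the $\{0,1\}$-edge property to show that no two adjacent vertices of $\mathcal{T}$ lie in the same $A$-orbit, and count the $n+1$ vertex orbits. You also carefully fill in two points the paper leaves implicit --- that vertex-level non-collision really does make the quotient map injective on each closed simplex (via the common-face argument in a genuine simplicial complex), and that any simplicial cell complex homeomorphic to the $n$-manifold $T^n$ must have an $n$-cell with $n+1$ distinct vertices, so $n+1$ is indeed the minimum --- which is exactly the right level of care for a complete proof.
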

\begin{proof}
The subgroup $A$ is a lattice isomorphic to $\Z^n$ and the quotient is homeomorphic to the torus $T^n$.

All the edges of $\mathcal{T}$ are $\{0,1\}$ vectors, the sum of their coordinates is between $0$ and $n$, which is only divisible by $n+1$ if the vector is $0$. So, no two adjacent vertices of $\mathcal{T}$ lie in the same $A$-orbit which makes $\mathcal{T}/A$ a simplicial cell complex.

Finally, $\mathcal{T}/A$ is a crystallization as it only has $n+1$ vertices.
\end{proof}

Note that the subgroup $A$ may be generated by the rows of the matrix
\begin{equation}
\label{equation:2-1_matrix}
\begin{bmatrix}
2 & 1 & 1 & \cdots & 1 \\
1 & 2 & 1 & \cdots & 1 \\
1 & 1 & 2 & \cdots & 1 \\
\vdots & \vdots & \vdots & \ddots & \vdots \\
1 & 1 & 1 & \cdots & 2
\end{bmatrix}.
\end{equation}

To construct a triangulation of $T^n$ with $2^{n+1}-1$ vertices we again take a quotient of $\mathcal{T}$, this time by the subgroup generated by the rows of the matrix:
\[
B:=\begin{bmatrix}
1 & 0 & 0 & \cdots & 0 & -2 \\
0 & 1 & 0 & \cdots & 0 & -4 \\
0 & 0 & 1 & \cdots & 0 & -8 \\
\vdots & \vdots & \vdots & \ddots & \vdots \\
0 & 0 & 0 & \cdots & 1 & -2^{n-1} \\
0 & 0 & 0 & \cdots & 0 & 2^{n+1}-1
\end{bmatrix}.
\]
The upper-left $(n-1)\times(n-1)$ minor of $B$ is the identity matrix, the rightmost column is $-2, -4, -8, -\ldots, -2^{n-1}, 2^{n+1}-1$, the remaining entries are zeroes.

\begin{proposition}
The quotient $\mathcal{T}/B$ is a triangulation of $T^n$ with $2^{n+1}-1$ vertices.
\end{proposition}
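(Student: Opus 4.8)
The plan is to verify, in order: (i) that $B$ has full rank, so that $\mathcal{T}/B$ is a CW-complex homeomorphic to $T^n$ with exactly $2^{n+1}-1$ vertices; (ii) that no two distinct vertices of $\mathcal{T}$ lying in a common $B$-orbit are at $\mathcal{T}$-edge-distance at most $2$; and (iii), granting (ii), to invoke Remark~\ref{remark:distance} — with the lattice $B\cong\Z^n$ playing the role of $\Z^n$ — to conclude that $\mathcal{T}/B$ is a genuine simplicial complex, hence a triangulation.

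Step (i) is routine lattice bookkeeping. The matrix $B$ is upper triangular with diagonal $(1,\dots,1,2^{n+1}-1)$, so $\det B=2^{n+1}-1\neq 0$, and $B$ is a rank-$n$ sublattice of $\Z^n$ isomorphic to $\Z^n$. Since $\mathcal{T}$ is invariant under all translations by $\Z^n$, it is in particular $B$-invariant and $B$-equivariantly homeomorphic to $\R^n$, with $B$ acting freely and properly discontinuously; hence $\mathcal{T}/B$ is a CW-complex homeomorphic to $\R^n/B\cong T^n$. The vertex set of $\mathcal{T}$ is exactly $\Z^n$, a single $\Z^n$-orbit, so $\mathcal{T}/B$ has $[\Z^n:B]=\det B=2^{n+1}-1$ vertices.

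For step (ii) I would first isolate the only combinatorial property of $\mathcal{T}$ that is needed: from the description of the staircase simplices, the edges of $\mathcal{T}$ are precisely the translates $\{u,\,u+\chi_S\}$ with $u\in\Z^n$ and $S$ a nonempty subset of $\{1,\dots,n\}$, where $\chi_S\in\{0,1\}^n$ is the indicator vector of $S$. Consequently a vertex $b\in\Z^n$ is joined to $0$ by a path of $\mathcal{T}$-edges of length at most $k$ if and only if $b$ is a sum of $k$ vectors of $\{0,1\}^n$; in particular the $\mathcal{T}$-edge-distance from $0$ to $b$ is at most $2$ if and only if $b\in\{0,1,2\}^n$. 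Using the $\Z^n$-invariance of $\mathcal{T}$, step (ii) thus becomes the purely arithmetic claim
\[
B\cap\{0,1,2\}^n=\{0\}.
\]

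This is the heart of the argument, and it is here that the geometric progression in the last column of $B$ does its work. Write a general element of $B$ as $b=\sum_{i=1}^{n-1}a_i r_i+a_n r_n$ with $a_i\in\Z$, where $r_i=e_i-2^i e_n$ for $i<n$ and $r_n=(2^{n+1}-1)e_n$; then $b_i=a_i$ for $i<n$ and $b_n=(2^{n+1}-1)a_n-\sum_{i=1}^{n-1}2^i a_i$. If $b\in\{0,1,2\}^n$ then $a_i\in\{0,1,2\}$ for every $i<n$, so $0\le\sum_{i=1}^{n-1}2^i a_i\le 2(2^n-2)=2^{n+1}-4$; therefore $b_n<0$ when $a_n\le -1$, and $b_n\ge(2^{n+1}-1)-(2^{n+1}-4)=3$ when $a_n\ge 1$. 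Hence $a_n=0$, and then $b_n=-\sum_{i=1}^{n-1}2^i a_i\le 0$ forces all $a_i=0$ and $b=0$. This gives $B\cap\{0,1,2\}^n=\{0\}$, hence $d(\mathcal{T})\ge 3$ relative to $B$-orbits, and Remark~\ref{remark:distance} then shows that $\mathcal{T}/B$ is a simplicial complex; together with step (i) this proves the proposition. I do not foresee a genuine obstacle: the homeomorphism and vertex count are standard, Remark~\ref{remark:distance} supplies the reduction of ``simplicial complex'' to a distance inequality, and the only new ingredient is the elementary size estimate above, which says that the contribution $2^{n+1}-1$ of $r_n$ to the last coordinate cannot be cancelled by the remaining coordinates, whose total size is at most $2^{n+1}-4$.
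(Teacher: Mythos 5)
There is a genuine gap in step (ii). You assert that ``a vertex $b\in\Z^n$ is joined to $0$ by a path of $\mathcal{T}$-edges of length at most $k$ if and only if $b$ is a sum of $k$ vectors of $\{0,1\}^n$,'' and conclude that edge-distance at most $2$ is equivalent to $b\in\{0,1,2\}^n$. But edges are undirected: a length-$2$ path from $0$ may go $0\to\chi_S\to\chi_S-\chi_T$, traversing the second edge $\{\chi_S-\chi_T,\,\chi_S\}$ backwards. The displacement after $k$ steps is therefore a sum $\sum_{j=1}^k\epsilon_j\chi_{S_j}$ with signs $\epsilon_j\in\{\pm1\}$, and the set of vertices at distance at most $2$ from $0$ is $\{0,1,2\}^n\cup\{-1,0,1\}^n\cup\{-2,-1,0\}^n$, not merely $\{0,1,2\}^n$. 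Since $B=-B$, the last piece comes for free once $\{0,1,2\}^n$ is handled, but $\{-1,0,1\}^n$ is self-symmetric and must be ruled out separately. Your arithmetic only verifies $B\cap\{0,1,2\}^n=\{0\}$, so the argument as written does not prove $d(\mathcal{T})\ge 3$.

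The paper's proof makes exactly this distinction explicit (``the sum or the difference of two $\{0,1\}$ vectors; the latter is either a $\{0,1,2\}$ vector or a $\{-1,0,1\}$ vector'') and treats the $\{-1,0,1\}$ case with the observation that $-\sum_{i=1}^{n-1}2^i a_i$, a signed sum of distinct powers of two starting from $2^1$, has absolute value at least $2$ unless all $a_i=0$. Your own size estimate adapts immediately: if $b\in B\cap\{-1,0,1\}^n$ then $a_i\in\{-1,0,1\}$ for $i<n$, forcing $a_n=0$ as before and then $b_n=-\sum 2^i a_i\in\{-1,0,1\}$ forces every $a_i=0$. So the gap is easily repaired, but as stated the proposal omits a necessary case.
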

\begin{proof}
The number of vertices is the determinant of $B$.

It remains to check that $\mathcal{T}/B$ is a triangulation. This will follow from the fact that the edge-distance between any two vertices in the same $B$-orbit is at least $3$. Equivalently, we need to prove that an integer linear combination of the rows of $B$ cannot be represented as the sum or the difference of two $\{0,1\}$ vectors; the latter is either a $\{0,1,2\}$ vector or a $\{-1,0,1\}$ vector.

Let $v$ be the sum of the rows $r_1, r_2, \ldots, r_n$ of $B$ taken with integer coefficients $c_1, \ldots, c_n$.
The first $n-1$ coordinates of $v$ are exactly $c_1, c_2, \ldots, c_{n-1}$.

It follows that $\{c_1, c_2, \ldots, c_{n-1}\}\subseteq \{-1,0,1,2\}$. So, the absolute value of the last coordinate of $c_1r_1+\ldots+c_{n-1}r_{n-1}$ is at most $2\cdot(2+4+\ldots+2^{n-1})=4\cdot(2^{n-1}-1)=2^{n+1}-4$. On the other hand, the last coordinate of $r_n$ is $2^{n+1}-1$, which is larger by $3\notin \{-1,0,1,2\}$. So, $c_n$ has to be $0$, otherwise the last coordinate of $v$ will be out of bounds.

Suppose that one of the coordinates of $v$ is $2$. Then $\{c_1, c_2, \ldots, c_{n-1}\}\subseteq \{0,1,2\}$. Which means that, since $c_n=0$, the last coordinate of $v$ is strictly negative, which is impossible since $v$ cannot have both $2$ and $-1$ among its coordinates.

We may now assume that $v$ has no $2$s among its coordinates. Then $\{c_1, c_2, \ldots, c_{n-1}\}\subseteq \{-1,0,1\}$ and the last coordinate of $v$ is the sum of different positive powers of $2$ with coefficients in $\{-1,0,1\}$. Which means that its absolute value is at least $2$, contradiction.
\end{proof}

Note, that $2^{n+1}-1$ is exactly the degree of a vertex in $\mathcal{T}$ plus $1$. So, no smaller triangulation of $T^n$ can be constructed by taking a quotient of $\mathcal{T}$:

\begin{proposition}
Suppose $C$ is a non-degenerate integer $n\times n$ matrix with the property, that no non-zero $\{0,1,2\}$ or $\{-1,0,1\}$ vector can be represented as an integer combination of its rows. Then $\left|{\det}(C)\right|\geq 2^{n+1}-1$.
\end{proposition}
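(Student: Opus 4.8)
The plan is to pass to the lattice $\Lambda\subseteq\Z^n$ generated by the rows of $C$, and to exhibit $2^{n+1}-1$ points of $\Z^n$ that are pairwise non-congruent modulo $\Lambda$. Since $C$ is non-degenerate, its rows are $\Q$-linearly independent, so $\Lambda$ is a full-rank sublattice of finite index $[\Z^n:\Lambda]=|{\det}(C)|$. Hence exhibiting that many pairwise distinct residues modulo $\Lambda$ gives the bound at once. The hypothesis translates to: $\Lambda$ contains no non-zero $\{-1,0,1\}$ vector and no non-zero $\{0,1,2\}$ vector.

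The set I would use is $W:=\{0,1\}^n\cup\{0,-1\}^n$, the union of the $0/1$ cube of lattice points and its reflection through the origin. These two cubes have $2^n$ points each and meet only at the origin (a coordinate lying in $\{0,1\}\cap\{0,-1\}$ must be $0$), so $|W|=2^{n+1}-1$. I claim distinct elements of $W$ land in distinct cosets of $\Lambda$. Suppose $u,v\in W$ with $u-v\in\Lambda$, and split into cases by which cube each of $u,v$ lies in. If $u,v\in\{0,1\}^n$, then $u-v$ is a $\{-1,0,1\}$ vector in $\Lambda$, hence zero by hypothesis; symmetrically if $u,v\in\{0,-1\}^n$. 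If $u\in\{0,1\}^n$ and $v\in\{0,-1\}^n$, then $u-v=u+(-v)$ is a sum of two $0/1$ vectors, hence a $\{0,1,2\}$ vector in $\Lambda$, hence zero by hypothesis; then $u=v$ lies in both cubes and so $u=v=0$. In every case $u=v$.

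Therefore the composite $W\hookrightarrow\Z^n\to\Z^n/\Lambda$ is injective, so $|{\det}(C)|=|\Z^n/\Lambda|\ge|W|=2^{n+1}-1$, which is exactly the desired inequality.

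I do not expect a real obstacle here: the entire content is the bookkeeping observation that the difference of two $0/1$ vectors is a $\{-1,0,1\}$ vector and their sum is a $\{0,1,2\}$ vector — precisely the two shapes excluded by the hypothesis — together with the fact that the cube $\{0,1\}^n$ and its negative overlap only at $0$. This also explains why $2^{n+1}-1$ is the natural target: it is $|\{0,1\}^n\cup\{0,-1\}^n|$, and it matches the index $|{\det}(B)|$ of the triangulation $\mathcal{T}/B$ constructed above, so the proposition is sharp.
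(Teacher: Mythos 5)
Your proof is correct, and it makes explicit what the paper leaves implicit: the proposition is stated without proof, but the remark immediately preceding it (that $2^{n+1}-1$ is the degree of a vertex of $\mathcal{T}$ plus one) is precisely the observation that $\{0,1\}^n\cup\{0,-1\}^n$ is the closed unit ball around a vertex in the staircase triangulation, and the hypothesis forces its $2^{n+1}-1$ points into distinct cosets of the row lattice. So your argument is the same one the authors have in mind, just written out in full.
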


So, to get a smaller triangulation of $T^n$ one needs to start with a different $\Z^n$-invariant triangulation of $\R^n$ with smaller vertex degrees and not too many vertex orbits.

\bibliography{source}
\bibliographystyle{abbrv}

\end{document}